\documentclass[11pt,letterpaper]{amsart}
\usepackage{titletoc}
\usepackage{blindtext}
\usepackage{amsfonts,amssymb,amsmath,amsthm}
\usepackage{enumerate}

\newtheorem{innercustomgeneric}{\customgenericname}
\providecommand{\customgenericname}{}
\newcommand{\newcustomtheorem}[2]{%
  \newenvironment{#1}[1]
  {%
   \renewcommand\customgenericname{#2}%
   \renewcommand\theinnercustomgeneric{##1}%
   \innercustomgeneric
  }
  {\endinnercustomgeneric}
}

\newcustomtheorem{customthm}{Theorem}
\newcustomtheorem{customlemma}{Lemma}

\usepackage{graphicx, color}

\makeatletter
\@namedef{subjclassname@2020}{\textup{2020} Mathematics Subject Classification}
\makeatother

\makeatletter

\newtheorem{lem}{Lemma}

\newtheorem{definition}{Definition}
\newtheorem*{problem}{Conjecture}

\numberwithin{equation}{section}
\begin{document}
\title[Topological shadowing for linear operators]{Topological shadowing for linear operators}

\author{Y. Yang}

\address{School of Mathematical Sciences,
Liaoning University, Shenyang 1000191, Peoples Republic of China.}\email{yynmath@163.com}

\author[C.A. Morales]{C.A. Morales}
\address{Hangzhou International Innovation Institute of Beihang University,  
Hangzhou 311115, China.}
\email{morales@impa.br}

\keywords{Topological shadowing, linear operator, Banach space.}
\subjclass[2020]{Primary 37B65; Secondary 47A16}

\begin{abstract}
We prove a linear operator of a finite-dimensional Banach space has the topological shadowing property, as introduced in \cite{lny}, if and only if its spectrum lies either within the open unit complex disk or outside the closure of that disk.
Moreover, the spectrum of every uniformly expansive linear operator with the topological shadowing property lies either within the open unit complex disk or outside the closure of that disk.
Lastly, we prove that a normal operator with the topological shadowing property on a Hilbert space does not have any nonzero nonwandering points.
\end{abstract}

\maketitle



\section{Introduction}

\noindent
Let \( X \) be a metric space. Denote by \( C^+(X) \) the set of all continuous functions \( \delta: X \to (0, \infty) \). Let \( f: X \to X \) be a bijective map.
For a given \( \delta \in C^+(X) \), we say that a sequence \( (x_n)_{n \in \mathbb{Z}} \) is a {\em \(\delta\)-pseudo orbit} if
$$
d(f(x_n), x_{n+1}) < \delta(f(x_n))\quad\quad\forall n \in \mathbb{Z}.
$$
Furthermore, \( (x_n)_{n \in \mathbb{Z}} \) can be {\em \(\delta\)-shadowed} if there exists \( x \in X \) such that
$$
d(f^n(x), x_n) < \delta(f^n(x))
\quad\quad\forall n \in \mathbb{Z}.
$$

\begin{definition}
We say that \( f \) has the {\em topological shadowing property} if for every \( \epsilon \in C^+(X) \), there exists \( \delta \in C^+(X) \) such that every \( \delta \)-pseudo orbit can be \( \epsilon \)-shadowed.
\end{definition}

This definition was introduced in \cite{lny} under the name "shadowing property." Here, we adopt the terminology from \cite{dlrw}, which is also used in \cite{c}, to avoid confusion with the classical {\em shadowing property} \cite{b}, where the \(\epsilon\) and \(\delta\) mentioned above are required to be constant functions.

Now, assume that \( X \) is a (complex) Banach space with positive dimension (or nontrivial for short). By a {\em linear operator}, we mean an invertible bounded linear operator \( L: X \to X \) with a bounded inverse \( L^{-1}: X \to X \). Let \( GL(X) \) denote the set of all such operators. The {\em spectrum} of \( L \) is the set $\sigma(L)$ of complex numbers \( \lambda \) for which the operator \( L - \lambda I \) is not invertible. We denote by \( \mathbb{D} \) the open unit disk in \( \mathbb{C} \) and by \( \overline{\mathbb{D}} \) the closure of \( \mathbb{D} \).

Some recent works have studied the shadowing property for linear operators \cite{bm}, \cite{bcdmp}. Regarding topological shadowing, we believe this property is very restrictive for such operators. This idea is encapsulated in the following conjecture:

\begin{problem}
A linear operator on a Banach space has the topological shadowing property if and only if its spectrum lies in either $\mathbb{D}$ or $\mathbb{C} \setminus \overline{\mathbb{D}}$.
\end{problem}

Our first result proves this conjecture in the finite-dimensional case.

\begin{customthm}{A}
\label{c0}
A linear operator of a finite-dimensional Banach space has the topological shadowing property if and only if its spectrum lies in either $\mathbb{D}$ or $\mathbb{C} \setminus \overline{\mathbb{D}}$.
\end{customthm}

Now, recall that a linear operator $L:X\to X$ is {\em uniformly expansive} \cite{h} if there $m\in\mathbb{N}$ such that
$\|L^m(x)\|\geq 2$ or $\|L^{-n}(x)\|\geq 2$ for all unitary $x\in X$.

\begin{customthm}{B}
\label{c-1}
The spectrum of every uniformly expansive linear operator with the topological shadowing property lies in either $\mathbb{D}$ or $\mathbb{C} \setminus \overline{\mathbb{D}}$.
\end{customthm}

Recall the {\em nonwandering set} $\Omega(L)$ of $L\in GL(X)$ consisting of those $x\in X$ such that
for every neighborhood $U$ of $x$ there is $n\in\mathbb{N}$ such that
$L^n(U)\cap U\neq\emptyset$.
A linear operator of a Hilbert space is {\em normal} if it commutes with its adjoint operator.

Mazur's Theorem \cite{ma} asserts that a normal operator of a Hilbert space has the shadowing property if and only if it is hyperbolic.
Then, a normal operator $L$ with the shadowing property of a Hilbert space has no nonzero nonwandering points, i.e. $\Omega(L)=\{0\}$.
Below we obtain the same conclusion but for normal operators with the topological shadowing property instead.

\begin{customthm}{C}
\label{c-2}
If $L$ is a normal operator with the topological shadowing property of a Hilbert space, then $\Omega(L)=\{0\}$.
\end{customthm}

To prove these results we obtain that if a linear operator of a (possibly infinite dimensional) Banach space is hyperbolic and has the topological shadowing property, then its spectrum lies in either $\mathbb{D}$ or $\mathbb{C}\setminus \overline{\mathbb{D}}$ (Lemma \ref{sim}).
It would be also interesting to prove that
every linear operator with the topological shadowing property of a Banach space has the shadowing property (this would be consequence of the above conjecture).
The rest of the paper is divided as follows. In Section \ref{sec2}, we prove some preliminary lemmas. In Section \ref{sec3}, we prove the theorems.

\section{Preliminary lemmas}
\label{sec2}

\noindent
The following lemma implicitly used elsewhere (e.g. Remark 2.6 in \cite{c}) is a direct consequence of Lemma 2.8 in \cite{lny}.
We state and prove it here for completeness.

\begin{lem}
\label{landru}
For every metric space $X$ and $\alpha\in C^+(X)$ there exists
$\beta\in C^+(X)$ such that
if $x,y\in X$ and $d(x,y)< \beta(x)$, then $d(x,y)< \alpha(y)$.
\end{lem}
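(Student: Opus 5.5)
The idea is to choose $\beta$ so small that $\alpha$ cannot drop much over a ball of radius $\beta(x)$ around $x$. Concretely, for each $x\in X$ set
$$
\beta(x)=\tfrac12\inf_{z\in X}\bigl(\alpha(z)+d(x,z)\bigr).
$$
The function $g(x)=\inf_{z}(\alpha(z)+d(x,z))$ is the standard $1$-Lipschitz lower envelope of $\alpha$. It satisfies $0\le g\le\alpha$ (take $z=x$), and $|g(x)-g(x')|\le d(x,x')$, so $g$ is continuous and $\beta=g/2$ is continuous.

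The main obstacle is positivity of $\beta$, since an infimum of positive numbers could be zero. I would handle it with continuity of $\alpha$ at $x$. Choose $r>0$ with $\alpha(z)>\alpha(x)/2$ whenever $d(x,z)<r$. If $d(x,z)<r$, then $\alpha(z)+d(x,z)>\alpha(x)/2$. If $d(x,z)\ge r$, then $\alpha(z)+d(x,z)\ge r$. Hence $g(x)\ge\min\{\alpha(x)/2,r\}>0$, so $\beta\in C^+(X)$.

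For the required implication, suppose $d(x,y)<\beta(x)$. Applying the definition of $g(x)$ with $z=y$ gives $g(x)\le\alpha(y)+d(x,y)$. Therefore
$$
d(x,y)<\tfrac12 g(x)\le \tfrac12\bigl(\alpha(y)+d(x,y)\bigr),
$$
which rearranges to $d(x,y)<\alpha(y)$, as claimed. The only delicate points are the continuity and positivity of $\beta$, and the Lipschitz envelope takes care of both. Alternatively one could cite Lemma 2.8 in \cite{lny}, as the paper indicates, but the argument above is self-contained.
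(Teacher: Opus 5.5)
Your proof is correct, and it differs from the paper's mainly in being self-contained. The paper invokes Lemma 2.8 of \cite{lny}, which supplies $\beta\in C^+(X)$ with $\beta(x)<\inf\{\alpha(y):y\in B(x,\beta(x))\}$, and then reads off $d(x,y)<\beta(x)<\alpha(y)$. You instead construct $\beta$ explicitly as half of the $1$-Lipschitz lower envelope $g(x)=\inf_{z}(\alpha(z)+d(x,z))$. You check continuity and positivity directly; your two-case bound $g(x)\ge\min\{\alpha(x)/2,r\}$ is exactly the point that needs care. You then get the conclusion from $2d(x,y)<g(x)\le\alpha(y)+d(x,y)$.

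The paper's version is shorter and passes through the slightly stronger statement that $\beta(x)<\alpha(y)$ on the whole ball $B(x,\beta(x))$. Yours gives an explicit $\frac12$-Lipschitz $\beta$ without any external reference. It also recovers that stronger property: if $d(x,y)<\beta(x)$, then $\alpha(y)\ge g(x)-d(x,y)>\beta(x)$. Using $g/3$ in place of $g/2$ would even give the strict inequality $\beta(x)<\inf_{B(x,\beta(x))}\alpha$ stated in the cited lemma.
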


\begin{proof}
Lemma 2.8 in \cite{lny} provides $\beta\in C^+(X)$ such that
$\beta(x)<\inf\{\alpha(y):y\in B(x,\beta(x))\}$ for all $x\in X$.
Then, if $d(x,y)<\beta(x)$, $y\in B(x,\beta(x))$ so $\beta(x)<\alpha(y)$ thus
$d(x,y)<\alpha(y)$ as desired.
\end{proof}

Next, we state two auxiliary variations of the topological shadowing property.
For the first one, given a bijective map of a metric space $g:Y\to Y$ and $\delta\in C^+(Y)$, 
we say that a sequence a $(x_n)_{n\geq0}$ is a {\em positive $\delta$-pseudo orbit} if
$$
d(g(x_n),x_{n+1})< \delta(g(x_n)),\quad\quad\forall n\geq0.
$$
We say that $(x_n)_{n\geq0}$ can be {\em $\delta$-shadowed} if there is $x\in Y$ such that
$$
d(g^n(x),x_n)<\delta(g^n(x)),\quad\quad\forall n\geq0.
$$
We shall use the following definition (See Definition 3.2 in \cite{c}):

\begin{definition}
A homeomorphism of a metric space $g:Y\to Y$ has the {\em topological positive shadowing property} if for every $\epsilon\in C^+(Y)$ there is $\delta\in C^+(Y)$ such that every positive $\delta$-pseudo orbit of $g$ can be $\epsilon$-shadowed.
\end{definition}

The following lemma can be proved as in Lemma 3.2 in \cite{c}:

\begin{lem}
\label{preparada}
A homeomorphism of a locally compact metric space has the topological shadowing property if and only if it has the topological positive shadowing property.
\end{lem}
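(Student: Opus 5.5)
We prove the two implications separately; in both, the key tool is the standard compactness argument for passing between finite and infinite pseudo orbits. Local compactness is used in exactly one place, namely to build a compact set that contains every candidate shadowing point.

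\emph{Topological shadowing implies topological positive shadowing.} Let $\epsilon\in C^+(Y)$. Using Lemma \ref{landru}, first replace $\epsilon$ by a smaller $\epsilon'\in C^+(Y)$ with the following property: if $d(x,y)<\epsilon'(y)$, then $d(x,y)<\epsilon(x)$, and $\epsilon'(y)<\epsilon(x)/2$ as well. Let $\delta$ be given by the topological shadowing property for $\epsilon'$, and let $(x_n)_{n\ge0}$ be a positive $\delta$-pseudo orbit. Extend it to negative indices by setting $x_{-k}=g^{-k}(x_0)$. The extended sequence is a $\delta$-pseudo orbit, because for $n<0$ we have $d(g(x_n),x_{n+1})=0$. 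Topological shadowing then gives a point $x$ with $d(g^n(x),x_n)<\epsilon'(g^n(x))\le\epsilon(g^n(x))$ for all $n\in\mathbb{Z}$, and in particular for all $n\ge0$. So this direction is essentially trivial: the auxiliary adjustment via Lemma \ref{landru} is not even needed here.

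\emph{Topological positive shadowing implies topological shadowing.} Let $\epsilon\in C^+(Y)$. Because $Y$ is locally compact, we may shrink $\epsilon$ continuously so that every closed ball $\overline{B}(y,\epsilon(y))$ is compact (for instance, take $\epsilon(y)$ below half the distance-to-noncompactness function, which is continuous or identically infinite). Choose $\epsilon_1\le\epsilon/2$ in the same spirit, with closed balls $\overline{B}(y,\epsilon_1(y))$ compact. Using Lemma \ref{landru}, arrange that $d(z,y)<\epsilon_1(z)$ forces $d(z,y)<\epsilon_1'(y)$ for a suitable $\epsilon_1'$, so that the shadowing condition can be read from the side of the pseudo orbit. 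Let $\delta$ be given by the positive property for this shrunken function.

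Now take a two-sided $\delta$-pseudo orbit $(x_n)_{n\in\mathbb{Z}}$. For each $k$, the shifted sequence $(x_{n-k})_{n\ge0}$ is a positive $\delta$-pseudo orbit, so it is shadowed by some $z_k$. Put $y_k=g^{-k}(z_k)$. Then $d(g^n(y_k),x_n)<\epsilon_1(g^n(y_k))$ for all $n\ge -k$. By the conversion above, each $y_k$ lies in the fixed compact set $\overline{B}(x_0,\epsilon_1'(x_0))$, so a subsequence converges to some $y$. Continuity of $g^n$ and of $\epsilon_1$ then gives $d(g^n(y),x_n)\le\epsilon_1(g^n(y))<\epsilon(g^n(y))$ for every $n\in\mathbb{Z}$, so $y$ $\epsilon$-shadows the pseudo orbit.

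The main obstacle is the bookkeeping: the tolerances are evaluated at the unknown shadowing points $g^n(y_k)$ rather than at the known points $x_n$. This is why we need Lemma \ref{landru} to transfer the bound to $x_0$, which confines all the $y_k$ to one compact ball. The strict inequality is recovered only because we shadowed with $\epsilon_1<\epsilon$.
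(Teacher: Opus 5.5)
Your proof is correct, and it is the standard argument the paper defers to (Lemma 3.2 of its cited reference). For the easy direction you extend a positive pseudo orbit backwards by the true orbit; for the converse you shadow the shifted positive pseudo orbits $(x_{n-k})_{n\geq 0}$, pull the shadowing points back by $g^{-k}$, confine them to a compact ball around $x_0$ via Lemma \ref{landru}, and pass to a limit.

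The only thing to tidy is the order of choices. Fix $\epsilon_1'$ with compact closed balls first, then take $\epsilon_1\leq\epsilon/2$ below the function $\beta$ that Lemma \ref{landru} gives for $\alpha=\epsilon_1'$. The compact-ball condition on $\epsilon$ and $\epsilon_1$ themselves is not needed.
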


To motivate the second variation we remind that a bijective map of a metric space has the {\em bounded shadowing property} if for every $\epsilon>0$ there is $\delta>0$ such that every bounded $\delta$-pseudo orbit can be $\epsilon$-shadowed \cite{lm}.
By replacing $\epsilon$ and $\delta$ in this definition by positive functions we obtain the following concept.

\begin{definition}
A bijective map of a metric space $g:Y\to Y$ has the {\em topological bounded shadowing property}
if for every $\epsilon\in C^+(Y)$ there is $\delta\in C^+(Y)$ such that
every {\em bounded} $\delta$-pseudo orbit of $g$ can be $\epsilon$-shadowed.
\end{definition}

It is clear that the the topological shadowing property implies the topological bounded shadowing property. We don't know if the converse holds.

On the other hand, given again a bijective map of a metric space $g:Y\to Y$ we define
$$
E^c_g=\{y\in Y:(g^n(y))_{n\in\mathbb{Z}}\mbox{ is bounded}\}.
$$
(We will drop the subindex $g$ above to write $E^c$ instead).
Clearly $g(E^c)=E^c$ so the restriction $g|_{E^c}:E^c\to E^c$ is well-defined.
We would like to prove that if $g$ has the topological bounded shadowing property, then so does $g|_{E^c}$. This is motivated by Lemma 5 in \cite{lm}.

We obtain the follows partial answer that suffices for our purpose:

\begin{lem}
\label{viva}
Let $g:Y\to Y$ be a homeomorphism with the topological bounded shadowing property of a metric space.
If $E^c$ is closed, then $g|_{E^c}:E^c\to E^c$ also has the topological bounded shadowing property.
\end{lem}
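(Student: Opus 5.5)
The plan is to extend a given $\epsilon\in C^+(E^c)$ to all of $Y$, apply the property of $g$, restrict the resulting $\delta$ back to $E^c$, and then check that the shadowing point automatically lies in $E^c$. The one genuinely new ingredient, beyond Lemma \ref{landru}, is this last check.

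\emph{Extension.} Since $E^c$ is closed in the metric space $Y$, I will apply the Tietze extension theorem to $\log\epsilon$, or equivalently use the explicit extension
$$\tilde\epsilon(y)=\inf_{z\in E^c}\min\{\epsilon(z)+d(y,z),\ \ldots\},$$
suitably truncated to stay positive. This gives $\tilde\epsilon\in C^+(Y)$ with $\tilde\epsilon|_{E^c}=\epsilon$. To guarantee positivity one can take $\tilde\epsilon=\exp(\widetilde{\log\epsilon})$ with $\widetilde{\log\epsilon}$ a real continuous Tietze extension. I will also replace $\tilde\epsilon$ by $\min\{\tilde\epsilon,1\}$, which keeps it in $C^+(Y)$; on $E^c$ this replaces $\epsilon$ by $\min\{\epsilon,1\}$, which only strengthens the conclusion. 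The point of the bound is that shadowing errors are then bounded.

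\emph{Applying the property of $g$.} The topological bounded shadowing property of $g$ gives $\tilde\delta\in C^+(Y)$, and I set $\delta=\tilde\delta|_{E^c}\in C^+(E^c)$. Let $(x_n)$ be a bounded $\delta$-pseudo orbit of $g|_{E^c}$. Since $x_n\in E^c$, we have $g(x_n)\in E^c$, so $\delta(g(x_n))=\tilde\delta(g(x_n))$. Hence $(x_n)$ is a bounded $\tilde\delta$-pseudo orbit of $g$, and there is $x\in Y$ with
$$d(g^n(x),x_n)<\tilde\epsilon(g^n(x))\le 1\quad\text{for all } n.$$
Because the metric on $E^c$ is the restricted one, it remains to show that $x\in E^c$. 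Once that holds, $g^n(x)\in E^c$ for all $n$, so $\tilde\epsilon(g^n(x))=\epsilon(g^n(x))$ and we get $\epsilon$-shadowing in $E^c$.

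\emph{Main obstacle: showing $x\in E^c$.} The orbit of $x$ stays within distance $1$ of the bounded set $\{x_n\}$, so $(g^n(x))_{n\in\mathbb Z}$ is bounded, which is exactly the definition of $x\in E^c$. The only subtle point is therefore ensuring that $\tilde\epsilon$ is bounded, which is what the truncation by $1$ achieves. If one does not truncate, $\tilde\epsilon(g^n(x))$ could be unbounded and the argument would fail. The closedness of $E^c$ is used precisely to obtain the continuous extension $\tilde\epsilon$; without it Tietze's theorem is unavailable, and $\epsilon$ might blow up or vanish near boundary points of $E^c$. No use of Lemma \ref{landru} seems necessary, since the pseudo-orbit condition is evaluated at $g(x_n)\in E^c$, where $\delta$ and $\tilde\delta$ agree.
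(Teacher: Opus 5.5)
Your proposal is correct and follows essentially the same route as the paper: you extend a bounded, positive version of $\epsilon$ to $Y$ by applying Tietze to the logarithm, restrict the resulting $\delta$ to $E^c$, and use the boundedness of both the pseudo orbit and the extended $\epsilon$ to force the shadowing point into $E^c$. Your ordering is actually slightly cleaner than the paper's: you extend $\log\epsilon$ as a possibly unbounded function, exponentiate, and only then take $\min\{\cdot,1\}$. The paper instead truncates first and then asserts that the logarithm of a positive bounded function is bounded, which fails when $\epsilon$ approaches $0$ on $E^c$.
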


\begin{proof}
We first make some observations about the extension of continuous functions on metric spaces.
Tietze's extension theorem says that for every closed subset $E\subset Y$ and every continuous map $h:E\to \mathbb{R}$ there is $H:Y\to \mathbb{R}$ continuous such that $H|_E=h$. Moreover, if $h$ is bounded, then $H$ can be chosen to be bounded too \cite{ke}.

Now, we assert that if $h$ is bounded and {\em positive}, then $H$ can be chosen to be bounded and positive too.
In fact, if $h$ is continuous, positive and bounded, then $\log h:E\to \mathbb{R}$ is well-defined, continuous and bounded.
Tietze's provides $F:Y\to\mathbb{R}$ continuous and bounded such that
$F|_E=\log h$. So, $H=e^F:X\to \mathbb{R}$ is continuous, bounded, positive
and $H|_E=h$. This proves the assertion.

Next, we fix $\epsilon^c\in C^+(E^c)$. By replacing it by $\min\{1,\epsilon^c\}$ if necessary, we can assume that $\epsilon^c$ is bounded. Then, the assertion above provides
$\epsilon\in C^+(Y)$ bounded such that $\epsilon|_{E^c}=\epsilon^c$. For this $\epsilon$ we choose
$\delta\in C^+(Y)$ from the topological shadowing of $g$, and define
$\delta^c=\delta|_{E^c}\in C^+(E^c)$.

Let $(y_n^c)_{n\in\mathbb{Z}}$ be a bounded $\delta^c$-pseudo orbit of $g|_{E^c}$. In particular, $(y^c_n)_{n\in\mathbb{Z}}$ is a $\delta$-pseudo orbit of $g$ so the topological bounded shadowing of $g$ provides
$y\in Y$ such that
$$
d(g^n(y),y^c_n)\leq \epsilon(g^n(y)),\quad\quad\forall n\in\mathbb{Z}.
$$
Since $(y^c_n)_{n\in\mathbb{Z}}$ is bounded, there is $\Delta>0$ such that
$$
d(y_1^c,y^c_n)\leq \Delta,\quad\quad\forall n\in\mathbb{Z}.
$$
Then,
$$
d(g^n(y),y^c_1)\leq d(g^n(y),x^c_n)+d(x^c_n,x^c_1)\leq \epsilon(g^n(y))+\Delta,\quad\quad\forall n\in\mathbb{Z}.
$$
Since $\epsilon$ is bounded, the above shows that $(g^n(y))_{n\in\mathbb{Z}}$ is bounded so $y\in E^c$ thus $g|_{E^c}$ has the topological bounded shadowing property.
This completes the proof.
\end{proof}

It is well-known that the topological shadowing property is an invariant under topological conjugacy \cite{dlrw}, \cite{lny}.
More precisely, if $Y$ and $Z$ are metric spaces, $H:Y\to Z$ is a homeomorphism and if $P:Z\to Z$ is a homeomorphism with the topological shadowing property, then so does $H\circ P\circ H^{-1}:Y\to Y$.
We need a similar property for the topological bounded shadowing property.
Recall that a map between metric spaces $H:Y\to Z$ is {\em Lipschitz} if there is
a positive real number $K$ such that
$d(H(y),H(y'))\leq Kd(y,y')$ for all $y,y'\in Y$.
We say that $H$ is a {\em Lipeomorphism} if it is Lipschitz with Lipschitz inverse.
By using Lemma 2.7 in \cite{lny} and the proof of Lemma 6 in \cite{lm} we obtain the following lemma.

\begin{lem}
\label{horizon}
Let $H:Z\to Y$ be a Lipeomorphism of metric spaces.
If $P:Z\to Z$ is a homeomorphism with the topological bounded shadowing property, then so does $H\circ P\circ H^{-1}:Y\to Y$.
\end{lem}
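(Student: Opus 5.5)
Write $Q=H\circ P\circ H^{-1}:Y\to Y$ and let $K\geq 1$ be a common Lipschitz constant for $H$ and $H^{-1}$. The plan is to pull a given $\epsilon\in C^+(Y)$ back to $Z$, apply the topological bounded shadowing of $P$ there, and push the resulting $\delta$ forward to $Y$. Lipschitz distortion is absorbed by the factors $K$. Mismatches of base points, such as $\delta(Q(y_n))$ versus $\delta(P(z_n))$ with $z_n=H^{-1}(y_n)$, are handled by Lemma \ref{landru} (this is the role of Lemma 2.7 in \cite{lny}).

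First, given $\epsilon\in C^+(Y)$, define $\epsilon'=\frac1K\,\epsilon\circ H\in C^+(Z)$. Let $\delta'\in C^+(Z)$ be given by the topological bounded shadowing property of $P$ for $\epsilon'$. Set $\delta=\frac1K\,\delta'\circ H^{-1}\in C^+(Y)$.

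Next, take a bounded $\delta$-pseudo orbit $(y_n)$ of $Q$ and put $z_n=H^{-1}(y_n)$. Since $H^{-1}$ is Lipschitz, $(z_n)$ is bounded; this is exactly where the Lipschitz hypothesis, rather than mere continuity, is used, as in Lemma 6 of \cite{lm}. Because $H^{-1}(Q(y_n))=P(z_n)$, we get
$$
d(P(z_n),z_{n+1})\leq K\,d(Q(y_n),y_{n+1})<K\delta(Q(y_n))=\delta'(P(z_n)).
$$
So $(z_n)$ is a bounded $\delta'$-pseudo orbit of $P$, and it is $\epsilon'$-shadowed by some $z\in Z$. Put $y=H(z)$. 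Then $Q^n(y)=H(P^n(z))$, and
$$
d(Q^n(y),y_n)\leq K\,d(P^n(z),z_n)<K\epsilon'(P^n(z))=\epsilon(Q^n(y)).
$$

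This choice of $\epsilon'$ and $\delta$ already places every function at the correct base point, so Lemma \ref{landru} is not needed here. If the conventions placed $\delta$ at a different point, I would insert a preliminary application of Lemma \ref{landru} to move between base points. The only genuine obstacle is preserving boundedness of pseudo orbits under the conjugacy, which the Lipschitz condition resolves. The resulting argument is routine.
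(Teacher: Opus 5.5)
Your proof is correct and follows the route the paper indicates. You transfer $\epsilon$ and $\delta$ through the conjugacy $H$, and use that $H^{-1}$ is Lipschitz to send bounded pseudo orbits of $H\circ P\circ H^{-1}$ to bounded pseudo orbits of $P$, as in Lemma 6 of \cite{lm}. The only difference is that you absorb the distortion with the Lipschitz constant $K$ instead of invoking Lemma 2.7 of \cite{lny}. This is legitimate because $H^{-1}(Q(y_n))=P(z_n)$ and $H(P^n(z))=Q^n(y)$ hold exactly, so no base-point adjustment is needed.
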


To state the next lemma, given a Banach space $X$ we say that $L\in GL(X)$ is a {\em linear isometry}
if $\|L(x)\|=\|x\|$ for all $x\in X$.

It is well-known that a linear isometry has not the bounded shadowing property \cite{lm}.
A slight modification of the proof in \cite{lm} permits to extend
this conclusion to the topological bounded shadowing property. More precisely, we have the following lemma.

\begin{lem}
\label{l1}
A linear isometry of a nontrivial Banach space does not have the topological bounded shadowing property.
\end{lem}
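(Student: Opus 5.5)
We argue by contradiction. Suppose $L$ is a linear isometry of a nontrivial Banach space $X$ with the topological bounded shadowing property. Since $L$ is an isometry, every orbit is bounded: $\|L^n(x)\|=\|x\|$ for all $n\in\mathbb{Z}$. The plan is to adapt the argument of \cite{lm}, which shows that isometries lack bounded shadowing, by restricting attention to a region where the continuous functions $\epsilon,\delta$ can be compared with constants. Take the constant function $\epsilon\equiv 1\in C^+(X)$, and let $\delta\in C^+(X)$ be given by the topological bounded shadowing property. Fix a unit vector $v\in X$. The closed ball $\overline{B}(0,R)$ is not compact in infinite dimensions, so $\delta$ need not have a positive lower bound there. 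We avoid this by working along the single ray $\{tv:t\in[0,R]\}$ and its $L$-orbit. The set $K=\{L^n(tv): n\in\mathbb{Z}, t\in[0,R]\}$ need not be compact, so instead we build a pseudo orbit whose points $g(x_n)$ lie only on the ray $\{tv\}$ itself, by pulling back at each step.

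The pseudo orbit is defined as follows. For $n\le 0$ set $x_n=0$. For $n\geq 1$ set $x_n=L^{n}(t_n v)$, where $t_n\ge 0$ is nondecreasing, $t_n=0$ for $n\le 0$, and $t_n$ increases by small increments until it reaches a value $R$ large compared with $\epsilon=1$, say $R=3$, after which it stays constant. Then $L(x_n)=L^{n+1}(t_nv)$ and $\|L(x_n)-x_{n+1}\|=|t_{n+1}-t_n|$. We need $|t_{n+1}-t_n|<\delta(L^{n+1}(t_nv))$, and the point $L^{n+1}(t_nv)$ moves around, which is the key difficulty. To fix it, define instead $x_n=L^{n}(y_n)$ with $y_n=t_nv$. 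Then $L(x_n)-x_{n+1}=L^{n+1}(y_n-y_{n+1})$, which has norm $|t_n-t_{n+1}|$, and the condition is evaluated at $L(x_n)=L^{n+1}(t_nv)$. The evaluation point is still moving, so this does not remove the difficulty. The better choice is to take $x_n=t_nv$ directly, but that is not a pseudo orbit unless $L v\approx v$.

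This is the main obstacle, and I would handle it via a continuity trick. Define $\eta(t)=\inf_{n}\delta(L^n(tv))$; this may be zero. A cleaner route is to let the jumps be adaptive. Construct $t_{n+1}$ inductively, given $t_n$, as $t_{n+1}=\min\{R,\, t_n+\tfrac12\delta(L^{n+1}(t_nv))\}$. The risk is that the $t_n$ converge to some $t^*<R$ without reaching $R$. In that case the sequence is still a bounded $\delta$-pseudo orbit, and it suffices to obtain a contradiction from the fact that the shadowing orbit $L^n(x)$ has constant norm $\|x\|$. We have $\|x_n\|=t_n$ with $t_0=0$ and $t_n\to t^*$, and shadowing with $\epsilon=1$ gives $|\|x\|-t_n|<1$ for all $n$, which forces $t^*<2$. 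So the argument needs $t^*\ge 2$. If the increments are too small, we restart from $t^*$: consider the points $L^{n}(t^*v)$, and use continuity of $\delta$ at finitely many points together with the scaling $x\mapsto$ a dilation conjugacy.

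An alternative that sidesteps the obstacle is to rescale. For $r>0$, the dilation $D_r(x)=rx$ is a Lipeomorphism commuting with $L$, so by Lemma \ref{horizon} the space can be rescaled. Combined with the observation that each step $t\mapsto t+\delta/2$ can be paired with the freedom to choose $\epsilon$ as a small function, this reduces the question to showing that some bounded pseudo orbit drifts in norm by more than $2\sup\epsilon$ along the ray. I expect this drift construction, which must ensure the accumulated increments exceed $2$ despite $\delta$ possibly decaying along orbits, to be the crux. My first attempt would be to choose $\epsilon$ small and decaying in $\|x\|$ only, so that any $\delta$ can be replaced by the radial function $\tilde\delta(x)=\inf\{\delta(y):\|y\|=\|x\|\}$, and then hope for positivity of $\tilde\delta$ on the relevant sphere-orbits.
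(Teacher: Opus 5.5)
Your proposal has a genuine gap: it stops where the proof has to happen and never produces the drifting bounded $\delta$-pseudo orbit. With $\epsilon\equiv1$, everything hinges on a chain $0=q_0,q_1,\dots,q_r$ with $\|q_r\|>2$ and $\|q_{i+1}-q_i\|<\delta(L^{i+1}(q_i))$ (your $q_n=t_nv$), and you leave its existence open. None of your remedies supplies it:
\begin{itemize}
\item the adaptive recursion may stall at some $t^*<2$, and restarting from $t^*$ meets the same problem;
\item the dilations $D_r$ commute with $L$ and only transport $(\epsilon,\delta)$ to $(r\epsilon(\cdot/r),r\delta(\cdot/r))$, so the ratio of drift to $\sup\epsilon$ is unchanged;
\item the radial infimum $\tilde\delta$ can vanish on a non-compact sphere;
\item a radial $\epsilon$ is constant along $L$-orbits, so it is no better than a constant.
\end{itemize}
The paper's proof is precisely your construction: the pseudo orbit $L^i(q_i)$ for $0\le i\le r$, extended by the orbits of $0$ and $x$, after which $\|z\|\le1$ and $\|z-x\|\le1$ give $\|x\|\le2$. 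The paper simply asserts that the chain exists. In finite dimension this is right, for a reason you did not use: $B[0,\|x\|]$ is compact and $L$-invariant, so $c=\min_{B[0,\|x\|]}\delta>0$, and $q_i=\frac{i}{r}x$ with $\|x\|/r<c$ works. The same argument applies whenever some $v\neq0$ has a precompact orbit. Then $\{tL^n(v):n\in\mathbb{Z},\ t\in[0,T]\}$ has compact closure, and you can run the chain along the ray $tv$. This is all Theorem A needs.

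In infinite dimension your suspicion is justified: the drift from $0$ cannot be completed in general. For the bilateral shift on $\ell^2(\mathbb{Z})$, let $Q(y)=\sum_k2^{-|k|}|y_k|^2$ and $\delta(y)=\max\{\kappa Q(y),\theta(\|y\|)\}$, with $\theta(s)=\frac{\rho}{4}\max\{0,1-s/\rho\}$, $\rho\le\frac12$ and $\kappa\le\frac{1}{100}$. This $\delta$ is admissible, since the $\delta$ given by the property may always be shrunk. Along any chain, $\sum_{m>i}Q(L^m(q_i))\le3\|q_i\|^2$ drops by at least $\frac34Q(L^{i+1}(q_i))$ at each step with $\rho\le\|q_i\|\le3\rho$. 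This traps every chain starting at $0$ in the ball of radius $3\rho$.

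The missing idea, for isometries with no nonzero precompact orbit, is a non-constant $\epsilon$ that decays along an orbit. (For an invertible isometry, precompactness of one half-orbit forces that of the whole orbit, so here both half-orbits of any $a\neq0$ are non-precompact.) Choose $\eta$-separated points $L^{n_k}(a)$ with $n_k\to+\infty$ and $L^{m_k}(a)$ with $m_k\to-\infty$. Take a continuous $\epsilon\le\eta/6$ with $\epsilon\le1/k$ on $B(L^{n_k}(a),\eta/3)\cup B(L^{m_k}(a),\eta/3)$; it exists because the centres are separated. Given $\delta$, take $b\neq a$ with $\|b-a\|<\delta(L(b))$. The one-jump sequence equal to $L^n(b)$ for $n\le0$ and $L^n(a)$ for $n\ge1$ is a bounded $\delta$-pseudo orbit. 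However, any $\epsilon$-shadowing point $z$ satisfies $\|z-a\|<\eta/6$ (from $n=1$). Hence, evaluating at the times $n_k$ and $m_k$, $\|z-a\|<1/k$ and $\|z-b\|<1/k$ for every $k$, forcing $a=z=b$.
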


\begin{proof}

Suppose by contradiction that there is a linear isometry of a nontrivial Banach space $L:X\to X$ with the topological bounded shadowing property.
Take $\delta\in C^+(X)$ from this property for $\epsilon=1$,
and let $x\in X$ be an arbitrary point.
Choose a sequence
$0=q_0, q_1,\cdots, q_r=x$ such that
$\|q_{i+1}-q_i\|\leq\delta(L^{i+1}(q_i))$ for $0\leq i\leq r-1$.
Define $(p_i)_{i\in\mathbb{Z}}$ by
$$
p_i = \left\{ \begin{array}{rcl}
L^i(q_0),& \mbox{if} & i<0\\
& & \\
L^i(q_i), & \mbox{if} & 0\leq i\leq r\\
& & \\
L^i(q_r), & \mbox{if} & r<i.
\end{array} \right.
$$
Since
$$
\sup_{i<0}\|L^i(q_0)\|=\|q_0\|<\infty\quad\mbox{ and }\quad
\sup_{i>r}\|L^i(q_r)\|=\|q_r\|<\infty,
$$
one has that the sequence $(p_i)_{i\in\mathbb{Z}}$ is bounded.
Moreover,
$$
\|L(p_i)-p_{i+1}\|=\|L^{i+1}(q_i)-L^{i+1}(q_{i+1})\|=\|q_{i+1}-q_i\|\leq\delta(L^{i+1}(q_i))=\delta(L(p_i))
$$
for all $0\leq i\leq r-1$.

So, $(p_i)_{i\in\mathbb{Z}}$ is a bounded
$\delta$-pseudo orbit of $L$.
Then, the topological bounded shadowing provides
$z\in X$ such that
$$
\|L^i(z)-p_i\|\leq 1,\quad\quad\forall i\in\mathbb{Z}.
$$
Since $L$ is a linear isometry,
$$
\|z-q_i\|=\|z-L^{-i}(p_i)\|=\|L^i(z)-p_i\|\leq1,\quad\quad\forall 0\leq i\leq r.
$$
In particular,
$\|z\|=\|z-0\|=\|z-q_0\|\leq1$ and $\|z-x\|=\|z-q_r\|\leq1$ so
$$
\|x\|\leq \|z\|+\|z-x\|\leq 2,\quad\quad\forall x\in X.
$$
So, $X=\{0\}$ thus $dim(X)=0$ a contradiction. This completes the proof.
\end{proof}

A linear operator of a Banach space $L:X\to X$ is
{\em expansive} if for every unitary $x\in X$ there is $n\in\mathbb{Z}$ such that $\|L^n(x)\|\geq2$.

\begin{lem}
\label{baja}
Let $L:X\to X$ be a linear operator with the topological shadowing property of a Banach space. If $E^c$ is closed, then $L$ is expansive.
\end{lem}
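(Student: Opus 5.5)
Suppose $L$ has the topological shadowing property and $E^c$ is closed; we show $L$ is expansive. The plan is to reduce to Lemma \ref{l1} via the restriction $L|_{E^c}$. First, $E^c$ is a linear subspace of $X$: sums and scalar multiples of vectors with bounded full orbits again have bounded full orbits. It is $L$-invariant, and by hypothesis closed, so it is a Banach space and $L^c=L|_{E^c}\in GL(E^c)$. Since the topological shadowing property implies the topological bounded shadowing property, Lemma \ref{viva} gives that $L^c$ has the topological bounded shadowing property.

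Next we show $E^c=\{0\}$. Define an equivalent norm on $E^c$ by
$$
\|x\|_*=\sup_{n\in\mathbb{Z}}\|L^n(x)\|,\qquad x\in E^c .
$$
This is finite for every $x\in E^c$. The family $\{L^n|_{E^c}\}_{n\in\mathbb{Z}}$ is pointwise bounded on the Banach space $E^c$, so the uniform boundedness principle gives $C$ with $\|L^n(x)\|\le C\|x\|$ for all $n$ and $x\in E^c$. Hence $\|x\|\le\|x\|_*\le C\|x\|$, so the identity $(E^c,\|\cdot\|)\to(E^c,\|\cdot\|_*)$ is a Lipeomorphism. Clearly $\|L(x)\|_*=\|x\|_*$, so $L^c$ is a linear isometry for $\|\cdot\|_*$. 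By Lemma \ref{horizon}, $L^c$ on $(E^c,\|\cdot\|_*)$ has the topological bounded shadowing property, which contradicts Lemma \ref{l1} unless $E^c$ is trivial. Thus $E^c=\{0\}$.

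Finally, $E^c=\{0\}$ implies expansivity. For a unit vector $x$, $x\notin E^c$, so $(L^n(x))_{n\in\mathbb{Z}}$ is unbounded and some $n$ has $\|L^n(x)\|\ge 2$. This is exactly the definition of expansive. The main point needing care is the uniform boundedness step, which makes $\|\cdot\|_*$ equivalent to the original norm; it is where closedness of $E^c$ is used a second time, since completeness is needed for Banach--Steinhaus. The rest consists of direct applications of Lemmas \ref{viva}, \ref{horizon} and \ref{l1}.
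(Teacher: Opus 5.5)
Your proposal is correct and follows essentially the same route as the paper: you restrict to the closed invariant subspace $E^c$, renorm it by $\sup_{n\in\mathbb{Z}}\|L^n(x)\|$ (an equivalent norm by Banach--Steinhaus) so that $L|_{E^c}$ becomes a linear isometry, and then contradict Lemma \ref{l1} via Lemma \ref{horizon}. The only differences are minor: you invoke Lemma \ref{viva} explicitly to get the topological bounded shadowing property of $L|_{E^c}$, which the paper uses without citing it, and you deduce expansivity from $E^c=\{0\}$ directly from the definition instead of citing Proposition 19 of \cite{bcdmp}.
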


\begin{proof}
The argument is similar to that in Proposition 10 in \cite{lm} we add some details for completeness.

By Proposition 19 of \cite{bcdmp} we only need to prove that $E^c$ is trivial (i.e. $E^c=\{0\}$).
Suppose by contradiction that $E^c$ is not trivial.
Since $E^c$ is a subspace, $E^c$ is a closed subspace and also it is invariant i.e.
$L(E^c)=E^c$.
Then, $P\in GL(Z)$ where $Z$ is $E^c$ equipped with the norm $\|\cdot\|$ induced by $X$
and $P=L|_{E^c}$.
We also have $\sup_{n\in \mathbb{Z}}\|P^n(z)\|<\infty$ for all $z\in Z$
hence $M=\sup_{n\in\mathbb{Z}}\|P^n\|<\infty$ by the Banach-Steinhouse Theorem.
This allows us to define the norm
$\|z\|'=\sup_{n\in\mathbb{Z}}\|P^n(z)\|$ for $z\in Z$ which clearly satisfies
$\|z\|\leq \|z\|'\leq M\|z\|$ for $z\in Z$.
We set $Y=Z$ endowed with the norm $\|\cdot\|'$ thus $Y$ is a Banach space.
We define $H:Z\to Y$ as the identity hence $H$ is a Lipeomorphism.
Since $P$ has the topological bounded shadowing property, $H\circ P\circ H^{-1}$ also does by Lemma \ref{horizon}.
However,
$\|H\circ P\circ H^{-1}\|'=\|L(z)\|'=\sup_{n\in\mathbb{Z}}\|L^{n+1}(z)\|=\|z'\|$ for all $z\in Y$ proving that $H\circ P\circ H^{-1}$ is a linear isometry of $Y$.
This contradicts Lemma \ref{l1} since $E^c$ is nontrivial, so $E^c$ must be trivial completing the proof.
\end{proof}

Recall that a linear operator of a Banach space is {\em hyperbolic} if its spectrum does not intersect the unit circle.
The proof of the following lemma is based on the proof of Theorem 3.3 in \cite{c}.

\begin{lem}
\label{sim}
If a hyperbolic linear operator of a Banach space
has the topological shadowing property, then its spectrum lies in either $\mathbb{D}$ or $\mathbb{C} \setminus \overline{\mathbb{D}}$.
\end{lem}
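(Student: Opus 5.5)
The plan is to argue by contradiction using the splitting given by hyperbolicity. Let $L\in GL(X)$ be hyperbolic with the topological shadowing property. The Riesz projections give a decomposition $X=E^s\oplus E^u$ into closed $L$-invariant subspaces, with $\sigma(L|_{E^s})\subset\mathbb{D}$ and $\sigma(L|_{E^u})\subset\mathbb{C}\setminus\overline{\mathbb{D}}$. There are then constants $C\geq1$ and $0<\lambda<1$ with $\|L^n|_{E^s}\|\leq C\lambda^n$ and $\|L^{-n}|_{E^u}\|\leq C\lambda^n$ for $n\geq0$. After replacing the norm by an equivalent adapted norm, we may take $C=1$. This is harmless, because the identity map is a Lipeomorphism and topological shadowing is invariant under conjugacy. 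If either $E^s$ or $E^u$ is trivial we are done, so assume both are nontrivial, and write $\pi_s,\pi_u$ for the projections.

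\textbf{Step 1: choice of $\epsilon$.} Following the proof of Theorem 3.3 in \cite{c}, I would take an $\epsilon\in C^+(X)$ that is very small compared with the geometry, namely $\epsilon(x)=\eta\min\{1,\|x\|\}+\eta e^{-\|x\|}$ or a similar function, with $\eta$ tiny relative to $\|\pi_s\|^{-1},\|\pi_u\|^{-1}$. The aim is that any shadowing orbit is forced to follow the pseudo orbit with errors that are controlled both near $0$ and near infinity. Let $\delta\in C^+(X)$ be the function given by topological shadowing, and use Lemma~\ref{landru} to replace the condition ``$<\delta(L(x_n))$'' by a condition evaluated at a convenient point. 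Set $c=\inf\{\delta(x):\|x\|\leq1\}>0$.

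\textbf{Step 2: the pseudo orbit.} Pick $a\in E^s$ and $b\in E^u$ with $\|a\|=\|b\|=c/4$. Then build a concatenated pseudo orbit that repeats infinitely many times a loop made of two orbit pieces. One piece is a forward piece along $E^s$, namely $L^n(a)$, which contracts to $0$. The other is a backward piece along $E^u$, namely $L^{-k}(b)$, which also tends to $0$. The transitions happen only inside the unit ball, where every jump of size at most $c/2$ is admissible. The lengths of the successive pieces are chosen to grow so that each excursion returns very close to $0$. In this way the pseudo orbit stays in a bounded region and has infinitely many ``switches'' from the stable to the unstable direction, and every jump is $<\delta$.

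\textbf{Step 3: deriving a contradiction.} Let $z$ be an $\epsilon$-shadowing point. Projecting the shadowing inequality along a stable stretch, and using the contraction on $E^s$ together with the expansion of $\pi_u(L^nz)$, the unstable component of $L^nz$ must be of order $\eta$ at the start of each stable stretch. Doing the same on each unstable stretch forces the stable component to be of order $\eta$ at its end. Chaining these estimates across one switch forces $\|b\|\lesssim\eta\|\pi_u\|$, or $\|a\|\lesssim\eta\|\pi_s\|$, which contradicts $\|a\|=\|b\|=c/4$ once $\eta$ is small. The delicate point is that $\epsilon$ is fixed before $\delta$, so $\eta$ cannot be tuned to $c$. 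I expect this to be the main obstacle. To get around it, I would first try to exploit scale, using the fact that $L$ is linear: choose $\epsilon$ to decay like $\eta\|x\|$ on an annulus where the switch takes place. Then the relative error is $\eta$ while the relative jump is a fixed fraction. If that fails, I would instead place the switch far out at infinity, where $\epsilon\to0$ exponentially while the admissible jumps, though also small, can be accumulated over many steps along orbit pieces.
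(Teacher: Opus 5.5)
Your setup (splitting, adapted norm, Lemma~\ref{landru}) is fine. The gap is that the mechanism of Steps 1--3 cannot produce a contradiction, for two independent reasons.

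\textbf{Your $\epsilon$ is essentially constant.} The function you write down satisfies $\eta\le\epsilon(x)\le 2\eta$ for all $x$, because $t+e^{-t}\ge 1$. A hyperbolic operator has the shadowing property, and by homogeneity this shadowing is Lipschitz: there is $K$ such that every $\gamma$-pseudo orbit is $K\gamma$-shadowed. Hence the constant $\delta=\eta/(2K)$ already works for your $\epsilon$, and no pseudo orbit at all will refute it.

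\textbf{Bounded pseudo orbits near $0$ cannot work.} Whatever $\epsilon$ you choose, your Step 2 pseudo orbit lives in the unit ball with all jumps at most $c/2$, where $c=\inf_{\|x\|\le1}\delta$. Shrinking $\delta$ preserves the property, so you must handle $\delta$ with $Kc/2<\min\{1,\inf_{\|x\|\le2}\epsilon\}$. This minimum is positive for your $\epsilon$, and for every $\epsilon$ in finite dimension. For such $\delta$, Lipschitz shadowing shadows your pseudo orbit within $Kc/2<\epsilon$. Your Step 3 chaining only gives $\|b\|\lesssim\eta$, which is consistent with $c\lesssim\eta/K$. So the obstacle you flag is fatal, not technical. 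The scaling fallback fails for the same reason: shadowing for a linear map is scale invariant, so a $\delta$ of order $\eta\|x\|/K$ on your annulus handles it.

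\textbf{The missing idea.} The contradiction must come from an \emph{unbounded} pseudo orbit, exploiting that $\epsilon$ may decay at infinity. The paper takes $\epsilon(z)=2^{-\|z\|}$ and a pseudo orbit with a \emph{single} jump. In your notation, set $z_n=L^n(x+y)$ for $n<0$ and $z_n=L^n(x)$ for $n\ge0$. Here $x\in E^u\setminus\{0\}$ is arbitrary and $y\in E^s\setminus\{0\}$ satisfies $\|y\|<\delta'(x)$, with $\delta'$ obtained from Lemma~\ref{landru} applied to $\delta$. The jump is therefore admissible no matter how small $\delta$ is.

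Both halves are genuine orbits escaping to infinity: forward along $E^u$, and backward because $L^{-1}$ expands $E^s$. Along the forward half, $\epsilon(z_n)\le 2^{-\lambda^{-n}\|x\|}$ decays doubly exponentially, which pins down the shadowing point $\bar x+\bar y$:
\begin{itemize}
\item $\bar x=x$, since otherwise the $E^u$-difference grows like $\lambda^{-n}$;
\item $\bar y=0$, since $\|L^n\bar y\|\ge\|L^{-1}\|^{-n}\|\bar y\|$.
\end{itemize}
But then, as $n\to-\infty$, $\|L^n(\bar x+\bar y)-z_n\|=\|L^n(y)\|\to\infty$. This contradicts the shadowing bound $<\epsilon(z_n)\le1$.

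Your second fallback points loosely in this direction. However, the actual argument needs neither a switch at infinity nor an accumulation of jumps. What makes it work is the single-jump orbit escaping to infinity in both time directions, together with this pinning argument.
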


\begin{proof}
Suppose by contradiction that there is a hyperbolic linear operator of a Banach space $L:X\to X$ which has the topological shadowing property
but its spectrum does not lie either in $\mathbb{D}$ or in $\mathbb{C}\setminus \overline{\mathbb{D}}$.
Then, from the existence of adapted norms (e.g. Lemma 1 in \cite{eh}) we can assume that there are a direct sum by closed nonzero subspaces $X=N\oplus M$ and a constant $0<\lambda<1$ such that
$L(N)=N$, $L(M)=M$,
$\|L(x)\|\geq \lambda^{-1} \|x\|$ for all $x\in N$ and $\|L(x)\|\leq\lambda\|x\|$ for all $x\in M$.
Furthermore,
$$
\|x+y\|=\max\{\|x\|,\|y\|\},\quad\quad\forall x+y\in N\oplus M.
$$
Define $\epsilon\in C^+(X)$ by
$$
\epsilon(z)=\frac{1}{2^{\|z\|}},\quad\quad\forall z\in X.
$$

For this $\epsilon$ we choose $\epsilon'\in C^+(X)$ such that Lemma \ref{landru} holds for $\alpha=\epsilon$ and $\beta=\epsilon'$.
For this $\epsilon'$ we let $\delta\in C^+(X)$ be given by the topological shadowing of $L$.
For this $\delta$ we choose $\delta'\in C^+(X)$ such that Lemma \ref{landru}
holds for $\alpha=\delta$ and $\beta=\delta'$.

Since both $M$ and $N$ are nonzero, and $\delta'(0)>0$, we can choose
$x_0^N\in N\setminus\{0\}$ and $y_0^M\in M\setminus \{0\}$ such that
$\|y^M_0\|<\delta'(x_0^N)$.
Define the sequence $(z_n)_{n\in\mathbb{Z}}$ by
$$
z_n= \left\{
\begin{array}{rcl}
L^n(x_0^N+y_0^M),  & \mbox{if} & n<0\\
& & \\
L^n(x_0^N),& \mbox{if} & n\geq0.
\end{array}
\right.
$$
It follows that
$\|L(z_n)-z_{n+1}\|<\delta'(z_{n+1})$ for all $n\in\mathbb{Z}$, and so,
$\|L(z_n)-z_{n+1}\|<\delta(L(z_n))$ for all $n\in\mathbb{Z}$ (by Lemma \ref{landru}).
We conclude that $(z_n)_{n\in\mathbb{Z}}$ is a $\delta$-pseudo orbit. Then,
there is $\bar{x}+\bar{y}\in N\oplus M$ such that
$\|L^n(\bar{x}+\bar{y})-z_n\|<\epsilon'(L^n(\bar{x}+\bar{y}))$ for all
$n\in\mathbb{Z}$.
So, Lemma \ref{landru} and the choice of $\epsilon'$ imply
\begin{equation}
\label{pupa}
\|L^n(\bar{x}+\bar{y})-z_n\|<\epsilon(z_n),\quad\quad\forall
n\in\mathbb{Z}.
\end{equation}

We claim that $\bar{x}=x_0^N$.
Suppose not. Since $z_n=L^n(x_0^N)$ for $n>0$, one has
\begin{equation}
\label{janga}
\|L^n(\bar{x}+\bar{y})-z_n\|=\max\{\|L^n(x_0^N-\bar{x})\|,L^n(\bar{y})\|\},\quad\quad\forall n>0.
\end{equation}
Then, since $\bar{x}\neq x^N_0$, $\|L^n(x_0^N-\bar{x})\|\geq \lambda^{-n}\|x^N_0-\bar{x}\|$ and $\|L^n(\bar{y})\|\leq\lambda^n\|\bar{y}\|$ for all $n\geq1$, there would exist $n_0\in\mathbb{N}$ such that
$$
\| L^n(\bar{x}+\bar{y})-z_n\|=\|L^n(x_0^N-\bar{x})\|\geq \lambda^{-n}\|x_0^N-\bar{x}\|,\quad\quad\forall n\geq n_0.
$$
But
$$
\epsilon(z_n)=\epsilon(L^n(x^N_0))=\frac{1}{2^{\|L^n(x^N_0)\|}},
\quad\quad\forall n>0,
$$
so replacing in \eqref{pupa} we get
$$
\|x_0^N-\bar{x}\|<\frac{\lambda^n}{2^{\lambda^{-n}\|x^N_0\|}},\quad\quad\forall n>n_0.
$$
Letting $n\to\infty$ we get $\bar{x}=x^N_0$ which is absurd.
This contradiction proves the claim.

Now, the claim and \eqref{janga} yield
$$
\|L^n(\bar{x}+\bar{y})-z_n\|=\|L^n(\bar{y})\|,
\quad\quad\forall n>0.
$$
It follows that
$$
\|L^n(\bar{y})\|=\|L^n(\bar{x}+\bar{y})-z_n\|<\epsilon(z_n)=\epsilon(L^n(x^N_0))=\frac{1}{2^{\|L^n(x_0^N)\|}}\leq \frac{1}{2^{\lambda^{-n}\|x_0^N\|}},\,\forall n>0,
$$
so
$$
\|\bar{y}\|\leq \|L^{-n}\|\|L^n(\bar{y})\|\leq \frac{\|L^{-1}\|^n}{2^{\lambda^{-n}\|x_0^N\|}},\quad\quad\forall n>0.
$$
Letting $n\to\infty$ above yields $\bar{y}=0$. Henceforth,
$$
\bar{x}+\bar{y}=x^N_0.
$$
Finally, we observe that $L^n(x^N_0)\to 0$ as $n\to-\infty$
whereas
$$
\|z_n\|=\|L^n(x^N_0+y^M_0)\|=\max\{\|L^n(x^N_0)\|,\|L^n(y^M_0)\|\}\geq \|L^n(y^M_0)\|\to\infty
$$
as
$n\to-\infty$
so
$$
\frac{1}{2^{\|z_n\|}}\geq \|L^n(\bar{x}+\bar{y})-z_n\|=\|L^n(x^N_0)-z_n\|
\geq \|z_n\|-\|L^n(x^N_0)\|\to\infty\mbox{ as } n\to-\infty
$$
that's absurd. This completes the proof.
\end{proof}

It is known that a linear operator of a Banach space is hyperbolic if and only if it is expansive and has the shadowing property \cite{bm}.
The arguments in this reference can be used to prove the following lemma.

\begin{lem}
\label{putin}
Every uniformly expansive linear operator with the topological shadowing property of a Banach space is hyperbolic.
\end{lem}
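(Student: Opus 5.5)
The approach is to follow the strategy of \cite{bm}: a linear operator is hyperbolic if and only if it is expansive and has the shadowing property. Since uniform expansivity implies expansivity, the task reduces to extracting a \emph{constant} shadowing statement from the topological one. This works by linearity.

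Fix $L$ uniformly expansive with the topological shadowing property. Take $\epsilon\equiv 1\in C^+(X)$ and let $\delta\in C^+(X)$ be given by the topological shadowing property. Set $\delta_0=\min\{\delta(z):\|z\|\le 1\}$. This minimum need not be attained in infinite dimension, so instead use Lemma \ref{landru} together with continuity at $0$ to get a ball $B(0,r)$ on which $\delta>\delta_0>0$.

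\textbf{Local shadowing near the origin.} First show that every $\delta_0$-pseudo orbit contained in $B(0,r/2)$, with $\delta_0<r/2$, is $1$-shadowed. Indeed, $L(x_n)$ may leave the ball, but $\|L(x_n)\|\le \|x_{n+1}\|+\delta_0<r$. Hence $d(L(x_n),x_{n+1})<\delta_0<\delta(L(x_n))$, so the sequence is a $\delta$-pseudo orbit and is $1$-shadowed.

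\textbf{Bounded pseudo orbits by scaling.} Given an arbitrary bounded $\eta$-pseudo orbit $(x_n)$ with $\sup\|x_n\|\le R$, rescale by $t=\min\{r/(2R),\delta_0/\eta\}$. The sequence $(tx_n)$ lies in $B(0,r/2)$ and is a $\delta_0$-pseudo orbit, so it is $1$-shadowed by some $z$. Then $z/t$ shadows $(x_n)$ with error $1/t$, which is linear in $\max\{R,\eta\}$. This yields a bounded-shadowing estimate that is uniform only for pseudo orbits whose size is comparable to $\eta$.

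\textbf{From there to hyperbolicity.} Now run the argument of \cite{bm}. For any bounded sequence $(w_n)$, solve $x_{n+1}=Lx_n+w_n$ with $(x_n)$ bounded, i.e.\ show $I-L$ acting on $\ell^\infty(\mathbb{Z},X)$ by $(x_n)\mapsto(x_{n+1}-Lx_n)$ is surjective. Uniform expansivity gives $\|L^m x\|\ge2$ or $\|L^{-m}x\|\ge2$ on the unit sphere; by the results in \cite{h} and \cite{bm}, this makes the shadowing orbit unique and the shift operator injective with closed range. Together with surjectivity, this gives hyperbolicity by the standard characterization: $\sigma(L)\cap S^1=\emptyset$ if and only if that operator is invertible on $\ell^\infty$.

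\textbf{Main obstacle: surjectivity.} Only pseudo orbits whose size is comparable to their jumps can be shadowed this way. Truncated sums $x_n=\sum_{k<n}L^{n-1-k}w_k$ may grow, so they are not a priori small compared to $\eta$. The first approach to try is to use the uniform expansivity bound to control the growth of the difference between a pseudo orbit and its shadow. A shadowing orbit $z$ with $\sup\|L^nz-x_n\|\le C$ where $x_n$ is unbounded still gives a bounded solution $x_n-L^nz$, because $L^nz$ solves the homogeneous equation. The remaining issue is to obtain such a $C$ independent of the size of $(x_n)$, which should follow from finite pieces plus uniform expansivity, as in \cite{bm}.
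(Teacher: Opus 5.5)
There is a genuine gap. The decisive step, surjectivity of $(x_n)\mapsto(x_{n+1}-Lx_n)$ on $\ell^\infty(\mathbb{Z},X)$, is left as a hope (``should follow from finite pieces plus uniform expansivity''). For linear operators that surjectivity is equivalent to the classical shadowing property, so it is essentially the whole content of the lemma. Nothing you establish beforehand helps with it.

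Your rescaling step only uses pseudo orbits lying entirely inside $B(0,r)$. The error it produces, $\max\{2R/r,\eta/\delta_0\}\ge 2R/r$, is of order $R$. The zero orbit already shadows any sequence in $B[0,R]$ within $R$, and $\eta\le(\|L\|+1)R$ automatically, so this ``bounded shadowing'' estimate carries no information. The finite-pieces route fails for the reason you yourself noticed. The jumps of a truncated pseudo orbit sit at points $L(x_n)$ far from $0$, where $\delta$ may be arbitrarily small, so rescaling makes the error grow with the length of the piece. You give no mechanism producing a uniform constant. The closed-range claim is also unargued, although it would follow from bijectivity by the open mapping theorem.

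The missing idea is that the condition $\|L(x_n)-x_{n+1}\|<\delta(L(x_n))$ only has to be checked where a jump actually occurs. Hence pseudo orbits made of exact orbit segments, possibly unbounded, glued by a single small jump near $0$ are admissible. This is where topological shadowing has real content, and you discard it by confining pseudo orbits to a ball.

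The paper's route avoids $\ell^\infty$ altogether. By Proposition 2 of \cite{bm}, uniform expansivity makes $E=\{x:\sup_{n\ge0}\|L^n(x)\|<\infty\}$ and $F=\{x:\sup_{n\ge0}\|L^{-n}(x)\|<\infty\}$ closed invariant subspaces. On them $L$, respectively $L^{-1}$, contracts exponentially, and $E\cap F=\{0\}$. So hyperbolicity reduces to $X=E+F$.

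For this, take $\epsilon\equiv1$, the corresponding $\delta$, and $\bar{\delta}$ from Lemma \ref{landru} applied to $\delta$. For $\|x\|<\bar{\delta}(0)$, define $z_n=L^n(x)$ for $n<0$ and $z_n=0$ for $n\ge0$. This sequence may be unbounded as $n\to-\infty$, but it has one jump, of size $\|x\|$ at the point $x$. Hence it is a $\delta$-pseudo orbit. A $1$-shadow $z$ then satisfies $z\in E$ and $x-z\in F$. So a ball around $0$, and therefore all of $X$, lies in $E+F$.
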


\begin{proof}
Let $L:X\to X$ be a uniformly expansive linear operator of a Banach space.
Define
$$
E=\{x\in X:(\|L^n(x)\|)_{n\geq0}\mbox{ is bounded}\}
$$
and
$$
F=\{x\in X:(\|L^{-n}(x)\|)_{n\geq0}\mbox{ is bounded}\}.
$$
Clearly, these are subspaces of $X$, $L(E)=E$ and $L(F)=F$.
Since $L$ is uniformly expansive, by Proposition 2 in \cite{bm}, there are $c\geq1$ and $0<\beta<1$ such that
$
E=\{x\in X:\|L^n(x)\|\leq c\beta^n\|x\|,\, \forall n\geq0\}
$
and
$
F=\{x\in X:\|L^{-n}(x)\|\leq c\beta^n\|x\|,\, \forall n\geq0\}.
$
From this it follows that $E$ and $F$ are closed subspaces. Moreover,
$E\cap F=\{0\}$ and by the spectral radio theorem we also have
$r(L|_E)\leq \beta<1$ and $r(L^{-1}|_F)\leq\beta<1$
(here $r(\cdot)$ denotes the spectral radio operation).

Next prove that $X=E+F$.
Consider the constant function $\epsilon=1$ in $C^+(X)$.
For this function we let $\delta\in C^+(X)$ be given by the topological shadowing property of $L$. Choose $\bar{\delta}\in C^+(X)$ such that
$\alpha=\delta$ and $\beta=\bar{\delta}$ satisfies Lemma \ref{landru}.
Take $x\in B(0,\bar{\delta}(0))$ and define
$$
z_n= \left\{
\begin{array}{rcl}
L^n(x),  & \mbox{if} & n<0\\
0,& \mbox{if} & n\geq0.
\end{array}
\right.
$$
It follows that
$
\|L(z_{-1})-z_0\|=\|L(L^{-1}(x))\|=\|x\|<\bar{\delta}(0)=\bar{\delta}(z_0)
$
so
$
\|L(z_{-1})-z_0\|<\delta(L(z_{-1}))
$ by Lemma \ref{landru}.
Then,
$(z_n)_{n\in\mathbb{Z}}$ is a $\delta$-pseudo orbit and so
there is $z\in X$ such that
$\|L^n(z)-z_n\|\leq 1$ for every $n\in\mathbb{Z}$.
It follows that $\|L^n(x-z)\|\leq 1$ for all $n<0$ hence $x-z\in F$.
We also have $\|L^n(z)\|\leq 1$ for $n>0$ so $z\in E$.
Since $x=z+(x-z)\in E+F$, we conclude that $x\in E+F$ proving
$B(0,\bar{\delta}(0))\subset E+F$.
Since $E+F$ is a subspace, $X=E+F$ hence $X=E\oplus F$.
We have then shown that there is a direct sum by closed subspaces $X=E\oplus F$ such that $L(E)=E$, $L(F)=F$, $r(L|_E)<1$ and $r(L^{-1}|_F)<1$. It is well-known that this implies that $L$ is hyperbolic hence we are done.
\end{proof}

The converse of the above lemma is false. More precisely, there are hyperbolic linear operators without the topological shadowing property:
just consider one which is neither a uniform contraction nor a uniform contraction
and apply Lemma \ref{sim}.

\section{Proof of the theorems}
\label{sec3}

\begin{proof}[Proof of Theorem \ref{c0}]\,
Let $L:X\to X$ be a linear operator of a finite-dimensional Banach space.
First assume that $L$ has the topological shadowing property.
Then, $L$ has the topological bounded shadowing property.
Since every subspace of a finite-dimensional Banach space is closed,
$E^c$ is a closed subspace of $X$. Then, $L$ is expansive by Lemma \ref{baja}
and so hyperbolic since $dim(X)<\infty$.
Therefore, the spectrum of $L$ lies in either $\mathbb{D}$ or $\mathbb{C}\setminus \overline{\mathbb{D}}$ by Lemma \ref{sim}.

To prove the converse, we
first assume that the spectrum of $L$ lies in $\mathbb{C}\setminus \overline{\mathbb{D}}$.
Then, there are $0<\lambda<1$ and an equivalent metric still denoted by $\|\cdot\|$ such that $\|L(x)\|\geq \lambda^{-1}\|x\|$ for all $x\in X$.
From now on we proceed as in the proof of Theorem 3.5 in \cite{c}:
Let $\epsilon\in C^+(X)$. Choose $\bar{\epsilon}\in C^+(X)$ such that
$\alpha=\epsilon$ and $\beta=\bar{\epsilon}$ satisfy the conclusion of Lemma
\ref{landru}. 
Since $X$ is finite-dimensional, the closed ball $B[0,\bar{\epsilon}(0)]$ is compact so the number
$$
m=(\lambda^{-1}-1)\min\{\bar{\epsilon}(x):x\in B[0,\bar{\epsilon}(0)]\}
$$
is positive and finite. We further define
$$
K=\frac{2\|L\|}{\lambda^{-1}-1}.
$$
Choose $\delta\in C^+(X)$ satisfying the following properties:
\begin{enumerate}
\item[(i)]
$\delta<\min\{(\lambda^{-1}-1)\bar{\epsilon},m\}$ (in particular $\delta$ is bounded);
\item[(ii)]
$\delta(x)<\frac{\|x\|}K$ for all $x\notin B[0,\bar{\epsilon}(0)]$;
\item[(iii)]
$\|x\|<\|x'\|$ implies $\delta(x)>\delta(x')$.
\end{enumerate}
Let $(x_n)_{n\geq0}$ be a $\delta$-pseudo orbit.
Then, we can rewrite $x_n$ as
$x_n=L^n(x_0)+\sum_{i=0}^{n-1}L^i(r_{n-i})$, $\forall n\geq 1,$
for some sequence $(r_n)_{n\geq1}$ in $X$ satisfying
$\|r_n\|\leq \delta(L(x_{n-1}))$ for all $n\geq 1$.
Define $(\bar{x}_n)_{n\geq1}$ by
$\bar{x}_n=L^{-n}(x_n)$ namely
$$
\bar{x}_n=x_0+\sum_{i=1}^nL^{-i}(r_i),\quad\quad \forall n\geq1.
$$
Since $\delta$ is bounded, $\|L^{-i}\|\leq \lambda^i$ for all $i\geq0$ and $0<\lambda<1$,
one has that the series
$
\bar{x}=x_0+\sum_{i=0}^\infty L^{-i}(r_i)
$
is convergent.
Now, for all $l\geq0$ one has
\begin{eqnarray*}
\|L^l(\bar{x})-x_l\|&=&\left\|L^l(x_0)+\sum_{i=1}^\infty L^{l-i}(r_i)-L^l(x_0)-\sum_{i=0}^{l-1}L^i(r_{l-i})\right\|\\
&=&\left\|\sum_{i=l+1}^\infty L^{l-i}(r_i)+\sum_{i=1}^lL^{l-i}(r_i)-\sum_{i=0}^{l-1}L^i(r_{l-i})\right\|\\
&=&\left\| \sum_{i=l+1}^\infty L^{l-i}(r_i)\right\|\\
&\leq& \sum_{i=l+1}^\infty\lambda^{i-l}\|r_i\|\\
&<& \sum_{i=l+1}^\infty \lambda^{i-l}\delta(L(x_{i-1}))\\
&<& \sum_{i=l+1}^\infty \lambda^{i-l}\delta(x_{i-1})
\end{eqnarray*}
(where the last inequality follows from (iii)).
If $x_l\in B[0,\epsilon(0)]$, then (i) implies
$$
\|L^l(\bar{x})-x_l\|\leq m\sum_{i=l+1}^\infty\lambda^{i-l}=\frac{m}{\lambda^{-1}-1}<\bar{\epsilon}(x_l).
$$
Otherwise, $\|L(x_l)\|\geq \lambda^{-1}\|x_l\|>\bar{\epsilon}(0)$ so $L(x_l)\notin B[0,\bar{\epsilon}(0)]$. Thus,
$\delta(L(x_l))<\frac{\|L(x_l)\|}K$.
It follows that
$\|L(x_l)-x_{l+1}\|< \frac{\|L(x_l)\|}K$ and then
$$
\|x_{l+1}\|\geq \|L(x_l)\|-\|L(x_l)-x_{l+1}\|> \lambda^{-1}\|x_l\|-\frac{\|L\|}K\|x_l\|=P\|x_l\|
$$
where $P=\lambda^{-1}-\frac{\|L\|}K>1$ so $x_{l+1}\notin B[0,\bar{\epsilon}(0)]$.
Repeating the argument we get $\|x_{i+1}\|\geq P\|x_i\|$ and $x_{i+1}\notin B[0,\bar{\epsilon}(0)]$ for all $i\geq l$.
This proves that the sequence $(\|x_i\|)_{i\geq l+1}$ is strictly increasing so
$(\delta(x_i))_{i\geq l+1}$ is strictly decreasing by (i) thus
$$
\|L^l(\bar{x})-x_l\|\leq \delta(x_l)\frac{1}{\lambda^{-1}-1}<(\lambda^{-1}-1)\bar{\epsilon}(x_l)\frac{1}{\lambda^{-1}-1}=\bar{\epsilon}(x_l).
$$
In any case, we have proved that
$$
\|L^l(\bar{x})-x_l\|<\bar{\epsilon}(x_l),\quad\quad\forall l\geq0
$$
and so
$$
\|L^l(\bar{x})-x_l\|<\epsilon(L^l(\bar{x})),
\quad\quad
\forall l\geq0
$$
by Lemma \ref{landru}.
We conclude that $L$ has the positive topological shadowing property.
Since every finite-dimensional Banach space is locally compact, $L$ has the topological shadowing property by Lemma \ref{preparada}.
This proves the result when the spectrum of $L$ lies in $\mathbb{C}\setminus \overline{\mathbb{D}}$.
If the spectrum of $L$ lies in $\mathbb{D}$, then
the one of $L^{-1}$ lies in $\mathbb{C}\setminus \overline{\mathbb{D}}$.
It follows that $L^{-1}$ has the topological shadowing property and so $L$ also does by Proposition 13 in \cite{dlrw}. This finishes the proof.
\end{proof}

\begin{proof}[Proof of Theorem \ref{c-1}]\,
Let $L:X\to X$ be a uniformly expansive linear operator of a Banach space. If $L$ has the topological shadowing property,
then $L$ is hyperbolic by Lemma \ref{putin} and so
the spectrum of $L$ lies either in $\mathbb{D}$ or in $\mathbb{C}\setminus \overline{\mathbb{D}}$ by Lemma \ref{sim}.
\end{proof}

\begin{proof}[Proof of Theorem \ref{c-2}]\,
Let $T:H\to H$ be a normal operator of a Hilbert space.
Then, $T$ is unitary equivalent to a multiplication operator of a measure space
(Theorem 4.6 in \cite{con}). This and Lemma 23 in \cite{lm} imply that $E^c$ is closed.

Now, suppose that $T$ has the topological shadowing property.
Then,
$T$ is expansive by Lemma \ref{baja}.
On the other hand, we say that $x\in H$ is a {\em periodic point} of $T$ if there is $n\in\mathbb{N}$ such that $T^n(x)=x$.
It follows from Lemma 4.6 in \cite{lny} that $\Omega(T)$ is the closure of the periodic points. Clearly every periodic point belongs to $E^c$ and, since $L$ is expansive, $E^c=\{0\}$ so the sole periodic point of $T$ is $0$. Therefore,
$\Omega(T)=\{0\}$ proving the result.
\end{proof}

\noindent

\section*{Acknowledgments}

\noindent
The first author would like to thank the School of Mathematical Sciences of the
Liaoning University, Shenyang, China, for its kindly hospitality during the preparation of this work.

\section*{Funding}

\noindent
YY was partially supported by the National Natural Science Foundation of China (Grant Nos. 12101281) and Scientific Research Foundation of Education Department of Liaoning Province, China (Grant Nos. JYTQN2023190).

\section*{Declaration of competing interest}

\noindent
There is no competing interest.

\section*{Data availability}

\noindent
No data was used for the research described in the article.

\end{document}